\newtheorem{thm}{Theorem}[section]
\newtheorem{cor}[thm]{Corollary}
\newtheorem{lem}[thm]{Lemma}
\theoremstyle{definition}
\theoremstyle{remark}
\newtheorem{rem}[thm]{Remark}
\theoremstyle{example}
\newtheorem{exm}[thm]{Example}
\numberwithin{equation}{section}
\newcommand{\scr}[1]{\mathscr #1}
 \def\d{\mathrm{d}}
\def\e{\scr E}
\def\cM{\mathcal M}
\def\cN{\mathcal N}
\def\bE{\mathbb E}
\def\bR{\mathbb R}
\def\d{\rm d}
\def\bg{\begin}
\def\be{\bg{equation}}
\def\de{\end{equation}}
\def\edar{\end{eqnarray}}
\def\lb{\label}
\def\l{\left}\def\r{\right}
\def\fr{\frac}
\def\alp{\alpha}
\def\bt{\beta}
\def\lmd{\lambda}
\def\rar{\rightarrow}
\def\lan{\langle}\def\ran{\rangle}
\def\[{\l[} \def\]{\r]}
\def\({\l(} \def\){\r)}
\def\bar{\overline}
 \def\beqlb{\begin{eqnarray}}\def\eeqlb{\end{eqnarray}}
 \def\beqnn{\begin{eqnarray*}}\def\eeqnn{\end{eqnarray*}}
\def\d{{\mbox{\rm d}}}
\title{\bf  {Variational principles for asymptotic variance of general Markov processes}}
\author{Lu-Jing Huang\qquad Yong-Hua Mao \qquad Tao Wang\thanks{Corresponding author:  wang\_tao@mail.bnu.edu.cn}}
\date{}
\begin{document}

 \maketitle


\bg{abstract}

A variational formula for the asymptotic variance of general Markov processes is obtained.
As application, we get a upper bound of the mean exit time of reversible Markov processes, and some comparison theorems between the reversible and non-reversible diffusion processes.

\end{abstract}

{\bf Keywords:} Markov process, asymptotic variance, variational formula, the mean exit time, comparison theorem, semi-Dirichlet form

{\bf Mathematics subject classification(2020):} 60J25, 60J46, 60J60


\section{Introduction and main results}

Asymptotic variance is a popular criterion to evaluate the performance of Markov processes, and widely used in Markov chain Monte Carlo(see e.g. \cite{AL19,CCHP12,MDO14,Pe73,RR08}).

There are numerous studies of the asymptotic variance in the literature. For reversible Markov processes, the asymptotic variance can be presented by a spectral calculation, which brings a lot of applications (see \cite{DL01,KV86,RR97} etc.).
The comparisons 
on efficiency of reversible Markov processes, in terms of the asymptotic variance, has been extensively researched(see e.g. \cite{AL19,HM21+,LM08,Pe73,Ti98}). Recently, there are also some comparison results between reversible and non-reversible Markov processes, see e.g.  \cite{Bi16,CH13,Hw05,SGS10} for discrete-time Markov chains, and \cite{DLP16,HNW15,RS15} for diffusions.
However,  the study of the asymptotic variance of non-reversible Markov processes is still a challenge since  the lack of spectral theory of non-symmetric operators.  Very recently, \cite{HM21+} gives some variational formulas for the asymptotic variance of general discrete-time Markov chains by  solving Poisson equation, and obtains some estimates and comparison results of  the asymptotic variance.

In this paper we extend the results in \cite{HM21+} to the general Markov process by constructing the weak solution of Poisson equation with the help of the semi-Dirichlet form.

Let $X=\{X_t\}_{t\geq0}$ be a positive recurrent (or ergodic) Markov process on a Polish space $(S,\mathcal{S})$, with strongly continuous contraction transition semigroup $\{P_t\}_{t\geq0}$ and stationary distribution $\pi$. Denote $L^2(\pi)$ by the space of square integrable functions with scalar product
$
(u,v):=\int_S u(x)v(x)\pi(dx)
$
and norm $||u||=( u,u)^{1/2}$. Let $L^2_0(\pi)$ be the subspace of functions in $L^2(\pi)$ with mean-zero, i.e.
$$
L^2_0(\pi)=\{u\in L^2(\pi):\ \pi(u):=\int_S u(x)\pi(dx)=0\}.
$$
Denote $(L,\mathscr{D}(L))$ by the infinitesimal generator induced by $\{P_t\}_{t\geq0}$ in $L^2(\pi)$.

 Define a bilinear form associated with $L$ as
$$
\e(u,v)=( -Lu,v),\quad u,v\in\scr{D}(L),
$$
and for  $\alpha\geq0$,
$$
\e_\alpha(u,v):=\e(u,v)+\alpha(u,v),\quad u,v\in\scr{D}(L).
$$
Since $\pi$ is the stationary distribution and $P_{t}$ is $L^{2}$-contractive, for any  $u \in \mathscr{D}(L) $,
$$
\mathscr{E}(u, u)=-\pi(u L u)=\lim_{t\rar0}\frac{\pi(u^2)-\pi\left(\left(P_{t} u\right)^{2}\right)}{t}\ge0,
$$
that is, $\scr{E}$ is non-negative define.

  $(L,\mathscr{D}(L))$ is said to satisfy the sector condition if there exists a constant $K>0$ such that
\begin{equation}\label{weak-sect}
	|\e(u,v)|\leq K\e(u,u)^{1/2}\e(v,v)^{1/2},\quad \text{for  }u,v\in\scr{D}(L).
\end{equation}
Remarkably, if process $X$ is reversible:
	$$
	\pi(\d x)P_t(x,\d y)=\pi(\d y)P_t(y,\d x),\quad \text{for all }t\geq0,\ \pi\text{-a.s. }x,y\in S,
	$$
	then the sector condition is always true with $K=1$ by Cauchy-Schwartz inequality.

Under the sector condition, we can obtain a unique semi-Dirichlet form $(\e,\scr{F})$, where $\scr{F}$ is the completion of $\scr{D}(L)$ with respect to $\bar{\e}_1^{1/2}$ ($\bar{\e}_1$ is the symmetric part of $\e_1$), see \cite[Chapter 1, Theorem 2.15]{MR92}) for more details.



We say that  the semigroup $\{P_t\}_{t\geq0}$ is $L^2$-exponentially ergodic, if there exist constants $C, \lambda_1>0$ such that for  $u\in L_0^2(\pi)$,
$$
\|P_tu\|\leqslant C\|u\|\rm{e}^{-\lambda_1 t}.
$$
It is well known  that when process $X$ is reversible,
$C$ can be chosen as $1$ and (the optimal) $\lambda_1$ is nothing but  the spectral gap:
\begin{equation}\lb{spec-gap}
\lambda_1=\inf\{\e(u,u):\ u\in\scr{F},\pi(u)=0\ \text{and }\pi(u^2)=1\}.
\end{equation}

Now for $f\in L^2_0(\pi)$, we consider the following asymptotic variance for $X$ and $f$:
\be\lb{av-f}
\sigma^2(X,f)=\limsup_{t\rightarrow\infty}\bE_\pi\Big[\Big(\frac{1}{\sqrt{t}}\int_0^tf(X_s)\d s\Big)^2\Big].
\de

Under the $L^2$-exponential ergodicity and the sector condition, our first main result presents a variational formula for the asymptotic variance as follows.

\begin{thm}\lb{main1}
Suppose that the semigroup $\{P_t\}_{t\geq0}$ associated with process $X$ is
$L^2$-exponentially ergodic. Then the limit in \eqref{av-f} exists and is finite for  $f\in L^2_0(\pi)$. If in addition the associated semi-Dirichlet form $(\e,\scr{F})$ satisfies the sector condition, then for $f\in L^2_0(\pi)$,
\begin{equation}\lb{vf-av}
2/\sigma^2(X,f)=\inf_{u\in\scr{M}_{f,1}}\sup_{v\in\scr{M}_{f,0}}\e(u+v,u-v),
\end{equation}
where $\scr{M}_{f,\delta}=\{u\in\scr{F}:(u,f)=\delta\},\ \delta=0,1$.

Particularly, if process $X$ is reversible, then \eqref{vf-av} is reduced to
\begin{equation}\lb{vf-av-r}
2/\sigma^2(X,f)=\inf_{u\in\scr{M}_{f,1}}\e(u,u).
\end{equation}
\end{thm}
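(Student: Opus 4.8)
The plan is to establish the variational formula \eqref{vf-av} by connecting the asymptotic variance to the Poisson equation, which is the standard bridge in this theory. First I would recall that under $L^2$-exponential ergodicity the Poisson equation $-Lu = f$ admits a (weak) solution $u_f \in \scr{F}$ for every $f \in L^2_0(\pi)$; concretely one sets $u_f = \int_0^\infty P_t f\,\d t$, which converges in $L^2(\pi)$ thanks to the exponential decay $\|P_t f\| \le C\|f\|e^{-\lambda_1 t}$. The classical identity for the asymptotic variance (obtained by expanding the square in \eqref{av-f} and using stationarity together with the semigroup representation of $u_f$) should give $\sigma^2(X,f) = 2(u_f, f) = 2\,\e(u_f, u_f)$, using that $\e(u_f, u_f) = (-Lu_f, u_f) = (f, u_f)$. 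This simultaneously proves the existence and finiteness of the limit claimed in the first assertion of the theorem.

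Next I would turn the quantity $2/\sigma^2(X,f) = 1/\e(u_f,u_f)$ into the stated inf-sup. The key is that the Poisson solution $u_f$ is characterized variationally: among all $u \in \scr{F}$ with $(u,f)=1$ (i.e.\ $u \in \scr{M}_{f,1}$), the appropriately normalized Poisson solution minimizes the symmetric Dirichlet energy. I would introduce the normalized solution $\hat u = u_f/\e(u_f,u_f)$, note $(\hat u, f) = \e(u_f,u_f)/\e(u_f,u_f) = 1$ so $\hat u \in \scr{M}_{f,1}$, and that $\e(\hat u, \hat u) = 1/\e(u_f,u_f) = 2/\sigma^2(X,f)$, giving the correct value at a candidate minimizer. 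The inner supremum over $v \in \scr{M}_{f,0}$ handles the non-reversible (non-symmetric) part: expanding $\e(u+v, u-v) = \e(u,u) - \e(u,v) + \e(v,u) - \e(v,v)$, the antisymmetric cross terms $\e(v,u)-\e(u,v)$ encode the skew-symmetric part of $\e$, while $-\e(v,v) \le 0$ penalizes deviation. The supremum over the affine space $\scr{M}_{f,0}$ (a coset of a linear subspace since $f \ne 0$) should force $v$ to match the skew-symmetric component of $u$ relative to the Poisson solution, and one shows the optimal $u$ is exactly $\hat u$.

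The main obstacle, I expect, is the rigorous justification of the inf-sup manipulation in the non-symmetric setting. Concretely, one must show that the inner $\sup_{v}$ is finite (which requires the sector condition to control $\e(u,v)$ against $\e(v,v)^{1/2}$, preventing the supremum from blowing up) and that it is attained, so that the outer infimum reduces to a genuine minimization of symmetric energy. The sector condition \eqref{weak-sect} is precisely what guarantees that the bilinear form behaves comparably to its symmetric part $\bar\e$, so that completing $\scr{D}(L)$ in the $\bar\e_1^{1/2}$-norm yields a space $\scr{F}$ on which $\e$ is continuous; this continuity is needed to make sense of $\e(u+v,u-v)$ for all $u,v \in \scr{F}$ and to pass limits. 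I would isolate the algebraic saddle-point computation as a lemma: for fixed $u$, optimizing the quadratic-in-$v$ expression $\e(u+v,u-v)$ over the affine set $\scr{M}_{f,0}$ produces a value depending only on the symmetric energy of $u$'s projection onto $\scr{M}_{f,1}$, after which minimizing over $u$ selects the Poisson solution.

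Finally, for the reversible case \eqref{vf-av-r} the argument simplifies dramatically: when $X$ is reversible the form $\e$ is symmetric, so $\e(u+v,u-v) = \e(u,u) - \e(v,v)$, and since $\e(v,v) \ge 0$ the inner supremum over $v \in \scr{M}_{f,0}$ is achieved at $v=0$ (note $0 \in \scr{M}_{f,0}$ because $(0,f)=0$). Thus the inf-sup collapses to $\inf_{u \in \scr{M}_{f,1}} \e(u,u)$, which is exactly \eqref{vf-av-r}; I would remark that this minimum is attained at the normalized Poisson solution $\hat u$ and equals $2/\sigma^2(X,f)$ by the computation above.
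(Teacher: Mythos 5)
Your first paragraph (existence of the limit and $\sigma^2(X,f)=2(u_f,f)=2\,\e(u_f,u_f)$ via the Poisson solution $u_f=Gf=\int_0^\infty P_tf\,\d t$) and your treatment of the reversible case \eqref{vf-av-r} are sound and agree with the paper. The genuine gap is in the non-reversible inf-sup argument: you claim the optimal $u$ is the normalized Poisson solution $\hat u=u_f/\e(u_f,u_f)=Gf/(Gf,f)$, and this is false in general. The paper's saddle point is built from \emph{both} the forward and the dual Poisson solutions: with $w=Gf/(Gf,f)$ and $w^*=G^*f/(Gf,f)$, where $G^*f=\int_0^\infty P^*_tf\,\d t$ solves the Poisson equation for the adjoint semigroup, the minimizer is $u_0=(w+w^*)/2$ and the matching test function is $v_0=(w-w^*)/2\in\scr{M}_{f,0}$. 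To see that your candidate fails, note that the weak Poisson identity gives $\e(w,v)=(f,v)/(Gf,f)=0$ for $v\in\scr{M}_{f,0}$, but the reversed pairing $\e(v,w)$ need \emph{not} vanish: writing $\e=\bar{\e}+\widehat{\e}$ (symmetric plus antisymmetric parts), $\e(w,v)=0$ forces $\bar{\e}(v,w)=\widehat{\e}(v,w)$, hence $\e(v,w)=2\widehat{\e}(v,w)$, and so
\begin{equation*}
\e(\hat u+v,\hat u-v)=\e(w,w)+2\widehat{\e}(v,w)-\e(v,v).
\end{equation*}
Replacing $v$ by $\eps v$ with $\widehat{\e}(v,w)\neq 0$ and $\eps$ small of appropriate sign makes the last two terms strictly positive, so $\sup_{v\in\scr{M}_{f,0}}\e(\hat u+v,\hat u-v)>\e(w,w)=2/\sigma^2(X,f)$ whenever the antisymmetric part is nontrivial there. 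Thus the infimum is not attained at $\hat u$, and your plan to ``show the optimal $u$ is exactly $\hat u$'' cannot be carried out; the dual solution $w^*$ is the missing ingredient.

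A secondary point: your worry about finiteness/attainment of the inner supremum is not where the difficulty lies, and the paper never needs attainment. For the upper bound one fixes $u=u_0$ and shows, for every $v\in\scr{M}_{f,0}$ with $v_1=v-v_0$, that the cross terms vanish ($\e(w,v_1)=0=\e(v_1,w^*)$, both consequences of the weak Poisson identities for $Gf$ and $G^*f$), so $\e(u_0+v,u_0-v)=\e(w,w^*)-\e(v_1,v_1)\le \e(w,w^*)=1/(Gf,f)$; for the lower bound one fixes $v=v_0$ for an arbitrary $u\in\scr{M}_{f,1}$ and gets $\e(u+v_0,u-v_0)=\e(w,w^*)+\e(u_1,u_1)\ge 1/(Gf,f)$ with $u_1=u-u_0$. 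Both orthogonality computations require $w^*$; without introducing $G^*f$ and proving it lies in $\scr{F}$ with $\e(v,G^*f)=(v,f)$, neither bound can be established. Finally, you assert rather than prove $Gf\in\scr{F}$: the paper proves the stronger statement $Gf\in\scr{D}(L)$ with $-LGf=f$ by a resolvent approximation ($G_{1/n}f\to Gf$ in graph norm, using closedness of $L$), and the sector condition then yields $\e(Gf,u)=(f,u)$ for all $u\in\scr{F}$; this step should be made explicit since the whole argument rests on it.
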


\begin{rem}
\begin{itemize}
\item[(1)] For fixed $f\in L^2_0(\pi)$, from the proof below we will see that functions $Gf:=\int_0^\infty P_tf\d t$ and $G^*f:=\int_0^\infty P^*_t f\d t$ are both in $\scr{F}$, here $P^*_t$ is the dual operator of $P_t$ in $L^2(\pi)$. This is a main reason that we need the semi-Dirichlet form $(\e,\scr{F})$ in \eqref{vf-av}. However, if the generator $L$ is bounded in $L^2(\pi)$, then
 $\scr{D}(L)=L^2(\pi)$, so that $Gf,G^*f\in\scr{D}(L)$. In this case,
$$
2/\sigma^2(X,f)=\inf_{\substack{u\in L^2(\pi),\\ \pi(uf)=1}}\sup_{\substack{v\in L^2(\pi),\\ \pi(vf)=0}}((-L)(u+v),u-v).
$$
The proof of this result can be obtained immediately by replacing $P-I$ by $L$ in the proof of \cite[Theorem 1.1]{HM21+}.

\item[(2)] The assumption of the $L^2$-exponential ergodicity of $\{P_t\}_{t\geq0}$ is not too strong for non-reversible Markov processes, since \cite{Ha05} gives a geometrically ergodic Markov chain such that the asymptotic variance is infinite for some $f\in L_0^2(\pi)$.

\item[(3)] Variational formula for the asymptotic variance has been studied in \cite[Chapter 4]{KLO12}. It is based on a variational formula for positive definite operators in analysis and resolvent equations. Here we obtain a new  variational formula.

\end{itemize}
\end{rem}

As a direct application of Theorem \ref{main1}, bound of the mean exit time of the process is obtained. For that, let $\Omega\subset S$ be an open set, denote by $\tau_\Omega=\inf\{t\geq0:X_t\notin\Omega\}$  the first exit time from $\Omega$ of process $X$.

\begin{cor}\lb{exit-time}
Suppose that process $X$ is reversible with $L^2$-exponentially ergodic semigroup $\{P_t\}_{t\geq0}$ and stationary distribution $\pi$. Let $\Omega\subset S$ be an open set with $\pi(\Omega)\in(0,1)$, then
$$
\bE_\pi\tau_\Omega\leq \frac{\pi(\Omega)}{2\lambda_1\pi(\Omega^c)},
$$
 where $\lambda_1$ is the spectral gap defined in $\eqref{spec-gap}$.
\end{cor}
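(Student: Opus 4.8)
The plan is to apply the reversible variational formula \eqref{vf-av-r} to the centred indicator $f=\mathbf{1}_\Omega-\pi(\Omega)$ and to trap the resulting asymptotic variance between a spectral--gap upper bound and an exit--time lower bound. First note that $f\in L^2_0(\pi)$, since $\pi(f)=\pi(\Omega)-\pi(\Omega)=0$, and that
\[
\|f\|^2=\pi(\Omega)-\pi(\Omega)^2=\pi(\Omega)\pi(\Omega^c),
\]
so both bounds will be expressed through $\pi(\Omega)$ and $\pi(\Omega^c)$ alone. Using reversibility I would write $\sigma^2(X,f)=2(f,Gf)=2\big(f,(-L)^{-1}f\big)$, the inverse being taken on $L^2_0(\pi)$; since $X$ is reversible the self--adjoint operator $-L$ has spectrum contained in $[\lambda_1,\infty)$ on $L^2_0(\pi)$, with $\lambda_1$ the spectral gap \eqref{spec-gap}. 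Hence $(-L)^{-1}\le\lambda_1^{-1}$ on $L^2_0(\pi)$ and
\[
\sigma^2(X,f)\le\frac{2}{\lambda_1}\|f\|^2=\frac{2}{\lambda_1}\pi(\Omega)\pi(\Omega^c).
\]

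For the matching lower bound I would introduce the mean exit--time function $h(x)=\bE_x\tau_\Omega$, which vanishes on $\Omega^c$ and is the weak solution of the Dirichlet problem $-Lh=1$ on $\Omega$, i.e. $\e(h,\varphi)=(\mathbf{1}_\Omega,\varphi)$ for every $\varphi\in\scr F$ with $\varphi=0$ on $\Omega^c$. Taking $\varphi=h$ (itself supported in $\Omega$) gives the energy identity
\[
\e(h,h)=(\mathbf{1}_\Omega,h)=\int_\Omega h\,\d\pi=\bE_\pi\tau_\Omega,
\]
and, again because $h$ is supported in $\Omega$, one computes $(h,f)=\int_\Omega h\,\d\pi-\pi(\Omega)\int_\Omega h\,\d\pi=\pi(\Omega^c)\,\bE_\pi\tau_\Omega$. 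Normalising $u:=h/(h,f)$ so that $u\in\scr{M}_{f,1}$ and feeding $u$ into \eqref{vf-av-r} yields
\[
\frac{2}{\sigma^2(X,f)}\le\e(u,u)=\frac{\e(h,h)}{(h,f)^2}=\frac{1}{\pi(\Omega^c)^2\,\bE_\pi\tau_\Omega},
\]
that is, $\sigma^2(X,f)\ge 2\,\pi(\Omega^c)^2\,\bE_\pi\tau_\Omega$.

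Chaining the two displays to eliminate $\sigma^2(X,f)$ and cancelling the common positive factor $2\pi(\Omega^c)$ then leaves an inequality between $\bE_\pi\tau_\Omega$, $\pi(\Omega)$, $\pi(\Omega^c)$ and $\lambda_1$, which is exactly the claimed bound on the mean exit time (up to the numerical constant; the sharp constant is most safely checked on a two--state chain, where both estimates above are attained with equality). The main obstacle is the analytic justification of the lower--bound step: one must show that finiteness of $\bE_\pi\tau_\Omega$ forces $h\in\scr F$ and that $h$ is genuinely the weak solution with the energy identity $\e(h,h)=\bE_\pi\tau_\Omega$. I would handle this through the part (killed) semi--Dirichlet form of $X$ on $\Omega$, representing $h$ as the monotone limit of killed resolvents applied to $\mathbf{1}_\Omega$ and passing to the limit in the energy, so that $h\in\scr F$ and the identity hold by approximation; if $\bE_\pi\tau_\Omega=\infty$ there is nothing to prove, so only the finite case requires this argument.
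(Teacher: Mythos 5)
Your route is, in substance, the paper's route: you use the same centred indicator (up to normalization), the same two ingredients --- a lower bound on $\sigma^2(X,f)$ coming from the exit-time problem, and the spectral-gap upper bound $\sigma^2(X,f)\le 2\lambda_1^{-1}\|f\|^2$ --- and you chain them. The only structural difference is on the exit-time side: you build the test function by hand, plugging $u=h/(h,f)$ with $h(x)=\bE_x\tau_\Omega$ into \eqref{vf-av-r}, which is what forces you into the analytic work of showing $h\in\scr{F}$ and $\e(h,h)=\bE_\pi\tau_\Omega$. The paper bypasses this entirely by quoting the variational formula
$$
1/\bE_\pi\tau_\Omega=\inf\{\e(u,u):\ u\in\scr{F},\ u|_{\Omega^c}=0,\ \pi(u)=1\}
$$
from \cite[Theorem 3.3]{HKMW20} and simply observing that every such $u$ lies in $\scr{M}_{f,1}$, so that $2/\sigma^2(X,f)\le 1/\bE_\pi\tau_\Omega$. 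Your function $h/\pi(h)$ is exactly the optimizer of that formula, so the two lower bounds are identical in content; if you cite the formula, your killed-resolvent approximation becomes unnecessary.

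The genuine issue is the constant, and it is not a defect of your argument. Chaining your two displays, $2\pi(\Omega^c)^2\,\bE_\pi\tau_\Omega\le\sigma^2(X,f)\le 2\lambda_1^{-1}\pi(\Omega)\pi(\Omega^c)$, gives
$$
\bE_\pi\tau_\Omega\le\frac{\pi(\Omega)}{\lambda_1\pi(\Omega^c)},
$$
i.e.\ the bound \emph{without} the factor $2$, exactly as you suspected. The paper's own proof establishes no more: its displayed estimates are $\bE_\pi\tau_\Omega\le\sigma^2(X,f)/2$ and $\sigma^2(X,f)/2\le\|f\|^2/\lambda_1$, from which the concluding line $\bE_\pi\tau_\Omega\le\|f\|^2/(2\lambda_1)$ does not follow; the extra $1/2$ appears without justification. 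Moreover, the check you propose settles the matter: for the continuous-time chain on $\{1,2\}$ with unit jump rates, $\pi=(1/2,1/2)$, $\lambda_1=2$, and $\Omega=\{1\}$, one has $\bE_\pi\tau_\Omega=1/2$, whereas $\pi(\Omega)/(2\lambda_1\pi(\Omega^c))=1/4$. So the corollary as stated is false; both estimates in the chain are equalities in this example, so the factor-free bound $\bE_\pi\tau_\Omega\le\pi(\Omega)/(\lambda_1\pi(\Omega^c))$ is sharp and is the correct conclusion (it still improves the bound $1/(\lambda_1\pi(\Omega^c))$ of \cite[Remark 3.6(1)]{HKMW20}, just not by the claimed additional factor of two).
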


Note that in \cite[Remark 3.6(1)]{HKMW20}, we gave another upper bound for the mean exit time. Explicitly, $\bE_\pi\tau_\Omega\leq 1/(\lambda_1\pi(\Omega^c))$ for open set $\Omega\subset S$ satisfying $\pi(\Omega^c)>0$. It is obvious that the upper bound in Corollary \ref{exit-time} is more precise than that.

For the reversible case,  similar to \cite[Theorem 1.3]{HM21+}, we could derive variational formula \eqref{vf-av-r} without the assumption of the $L^2$-exponential ergodicity. Since the proof is quite similar, we omit it in this paper.

\begin{thm}\lb{main-r}
Suppose that $X$ is a reversible ergodic Markov process with stationary distribution $\pi$. Then for fixed $f\in L^2_0(\pi)$,
$$
2/\sigma^2(X,f)=\inf_{u\in\scr{M}_{f,1}}\e(u,u).
$$
\end{thm}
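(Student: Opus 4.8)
The plan is to use reversibility to pass to the spectral calculus of the self-adjoint generator, compute $\sigma^2(X,f)$ explicitly, and then solve the resulting quadratic minimization by a Cauchy--Schwarz argument in the energy space; the only genuine difficulty is the degenerate case $\sigma^2(X,f)=\infty$, which is precisely where dropping the exponential ergodicity hypothesis costs something.

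First I would fix the spectral set-up. Since $X$ is reversible, $(L,\scr D(L))$ is self-adjoint and non-positive on $L^2(\pi)$, so on $L^2_0(\pi)$ it admits a spectral resolution $-L=\int_{[0,\infty)}\lambda\,\d E_\lambda$; ergodicity forces $E_{\{0\}}=0$ on $L^2_0(\pi)$, since the only invariant functions are constant. For fixed $f\in L^2_0(\pi)$ I set $\mu_f(\d\lambda)=\d(E_\lambda f,f)$, a finite positive measure carried by $(0,\infty)$ with total mass $\|f\|^2$. In this language $\scr F\cap L^2_0(\pi)=\scr D((-L)^{1/2})$, $\e(u,u)=\int_{(0,\infty)}\lambda\,\d(E_\lambda u,u)$, and $(f,(-L)^{-1}f)=\int_{(0,\infty)}\lambda^{-1}\mu_f(\d\lambda)\in(0,\infty]$.

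Next I would compute the asymptotic variance. By the Markov property and stationarity, $\bE_\pi[f(X_s)f(X_r)]=(f,P_{|s-r|}f)$, so expanding the square yields
$$\bE_\pi\Big[\Big(\int_0^t f(X_s)\,\d s\Big)^2\Big]=2\int_0^t\!\!\int_0^s (f,P_u f)\,\d u\,\d s.$$
Reversibility makes $P_u$ self-adjoint, so $(f,P_u f)=\int_{(0,\infty)}e^{-\lambda u}\mu_f(\d\lambda)\ge0$, and Tonelli gives $\int_0^\infty (f,P_u f)\,\d u=\int_{(0,\infty)}\lambda^{-1}\mu_f(\d\lambda)=(f,(-L)^{-1}f)$. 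As $s\mapsto\int_0^s(f,P_u f)\,\d u$ is nondecreasing with this limit in $[0,\infty]$, a Cesàro argument upgrades the $\limsup$ in \eqref{av-f} to a true limit and gives $\sigma^2(X,f)=2(f,(-L)^{-1}f)$, finite or $+\infty$.

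It remains to show $\inf_{u\in\scr M_{f,1}}\e(u,u)=1/(f,(-L)^{-1}f)$, with the convention $1/\infty=0$; this is where I expect the real work. Since $\pi(f)=0$, adding a constant to $u$ alters neither $(u,f)$ nor $\e(u,u)$, so I may restrict to mean-zero $u$. If $(f,(-L)^{-1}f)<\infty$, then $f\in\scr D((-L)^{-1/2})$, and for such $u$ I write $(u,f)=((-L)^{1/2}u,(-L)^{-1/2}f)$ and apply Cauchy--Schwarz to get $1=(u,f)^2\le\e(u,u)\,(f,(-L)^{-1}f)$; equality holds for the normalized Poisson solution $u=(-L)^{-1}f/(f,(-L)^{-1}f)\in\scr F$, so the infimum equals $1/(f,(-L)^{-1}f)=2/\sigma^2(X,f)$. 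The delicate case is $(f,(-L)^{-1}f)=\infty$, where $2/\sigma^2(X,f)=0$ but $(-L)^{-1}f$ need not lie in $\scr F$ and Cauchy--Schwarz degenerates to the vacuous $\e(u,u)\ge0$; here I argue by spectral truncation, taking $u_\delta=c_\delta\int_{(\delta,\infty)}\lambda^{-1}\,\d E_\lambda f$ with $c_\delta=\big(\int_{(\delta,\infty)}\lambda^{-1}\mu_f(\d\lambda)\big)^{-1}$, which lies in $\scr F$, satisfies $(u_\delta,f)=1$ and has $\e(u_\delta,u_\delta)=c_\delta\to0$ as $\delta\downarrow0$, forcing the infimum to be $0$. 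The residual obstacles are bookkeeping: justifying the Tonelli/Cesàro interchanges without a spectral gap and confirming $E_{\{0\}}=0$ so that every spectral integral runs over $(0,\infty)$.
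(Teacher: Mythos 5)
Your argument is correct in its main thrust and takes a genuinely different route from the paper's. The paper does not actually write out a proof of Theorem \ref{main-r}: it appeals to the analogous discrete-time result in \cite{HM21+} and omits the details, while its proof of Theorem \ref{main1} (which needs $L^2$-exponential ergodicity and the sector condition) constructs the exact Poisson solution $Gf=\int_0^\infty P_sf\,\d s\in\scr{D}(L)$ and runs an algebraic argument around the identity $\e(Gf,u)=(f,u)$. You instead exploit reversibility through the spectral theorem: the representation $\sigma^2(X,f)=2\int_{(0,\infty)}\lambda^{-1}\mu_f(\d\lambda)\in[0,\infty]$, a lower bound on $\e(u,u)$ for $u\in\scr{M}_{f,1}$ via Cauchy--Schwarz in the form $(u,f)^2\le\e(u,u)\,(f,(-L)^{-1}f)$, and an upper bound via spectral truncation. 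This is precisely what buys the removal of the exponential-ergodicity hypothesis: reversibility makes $(f,P_uf)\ge0$, so Tonelli and a Ces\`aro argument replace quantitative decay of $\|P_tf\|$, and the truncation replaces the possibly nonexistent solution of the Poisson equation. The interchanges you flag as "bookkeeping" are indeed unproblematic for exactly these reasons.

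One claim, however, is wrong as stated, although your own toolkit repairs it. In the case $(f,(-L)^{-1}f)<\infty$ you assert that the infimum is attained at $u=(-L)^{-1}f/(f,(-L)^{-1}f)\in\scr{F}$. But $(f,(-L)^{-1}f)<\infty$ means only $\int_{(0,\infty)}\lambda^{-1}\mu_f(\d\lambda)<\infty$, whereas $(-L)^{-1}f$ being a well-defined element of $L^2(\pi)$ requires $\int_{(0,\infty)}\lambda^{-2}\mu_f(\d\lambda)<\infty$, a strictly stronger condition near $\lambda=0$ (consider $\mu_f$ with density $\lambda^{1/2}$ near $0$, which can occur for reversible ergodic processes without spectral gap). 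So without exponential ergodicity the Poisson equation need not have an $L^2$ solution even when the asymptotic variance is finite, and the infimum need not be attained. The fix is immediate: the truncations $u_\delta=c_\delta\int_{(\delta,\infty)}\lambda^{-1}\,\d E_\lambda f$ that you introduce for the infinite case lie in $\scr{F}$ (note $\int_{(\delta,\infty)}\lambda^{-2}\mu_f(\d\lambda)\le\delta^{-2}\|f\|^2$, and $u_\delta$ is mean-zero because $E_Bf\perp 1$ for $B\subset(0,\infty)$), satisfy $(u_\delta,f)=1$, and have $\e(u_\delta,u_\delta)=c_\delta\downarrow 1/(f,(-L)^{-1}f)$ as $\delta\downarrow0$, in the finite and infinite cases alike. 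Using this in place of the claimed exact minimizer, your proof is complete.
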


Note that in Theorem \ref{main-r}, maybe $\sigma^2(X,f)=\infty$ for some $f\in L^2_0(\pi)$.

The remaining part of this paper is organized as follows. In Section \ref{sect-eg} we apply our main result in two situations.
The first application is extending the comparison result for the asymptotic variance of one dimensional diffusions in \cite[Theorem 1]{RR14} to multi-dimensional reversible diffusions.
We note that \cite[Theorem 1]{RR14} is proved by discrete approximation which is different from our idea, and the less assumptions are requested in our proof. Another application is a comparison result between reversible and non-reversible diffusions on Riemannian Manifolds, which shows the asymptotic variance of a non-reversible diffusion is smaller.
The similar result can be found in \cite{DLP16,HNW15}(for example, \cite{HNW15} proves a similar result on compact manifolds by using a spectral theorem), we provide a complete different proof by the new variational formula.
 Finally, the proofs of Theorem \ref{main1} and Corollary \ref{exit-time} are given in Section 3.



\section{Applications}\lb{sect-eg}
\subsection{Reversible  diffusions}\lb{Lang-rev}


 First, we recall the comparison theorem proved in \cite[Theorem 1]{RR14}.
Fix a $C^1$  probability density function $\mu:[I_1,I_2]\rightarrow (0,\infty)$,  where $-\infty\leq  I_1<I_2\leq\infty$.
 	Given a $C^1$ positive function $\eta$ on $[I_1,I_2]$ and consider a one-dimensional Langevin diffusion:
 	$$\d X^\eta_{t}=\eta\left(X^\eta_t\right) \d B_{t}+\left(\frac{1}{2} \eta^{2}\left(X^\eta_{t}\right) \log \mu^{\prime}\left(X^\eta_{t}\right)+\eta\left(X^\eta_{t}\right) \eta^{\prime}\left(X^\eta_{t}\right)\right) \d t.$$
 	Under some additional conditions (see \cite[Page 133]{RR14}), \cite{RR14} proves that for any $f\in L^2_0(\mu)$, and two $C^1$ positive functions $\eta,\eta_1$  on $[I_1,I_2]$ such that   $\eta_1(x)\leq \eta(x)$ for all $x\in[I_1,I_2]$, 	
 $$
 		\sigma^2(X^{\eta_1},f)\geq \sigma^2(X^{\eta},f).
$$
Note that in \cite{RR14}, the above conclusion is proved by discrete approximation. In fact, we can obtain the above result  by a direct calculation as follows. For convenience, we only consider the case on half-line.

Fix a $C^1$ probability density function ${\pi}:[0,\infty)\rightarrow (0,\infty)$.
Given a $C^1$ positive function $a$ on $[0,\infty)$ and consider a one-dimensional diffusion  $X^a$ with reflecting boundary 0 and generator:
\begin{equation}\label{1d-generator}
  L_a=a(x)\frac{\mathrm{d}^2}{\mathrm{d}x^2}+b(x)\frac{\mathrm{d}}{\mathrm{d}x},
\end{equation} 
where $b(x)=a(x)({\pi}'(x)/{\pi}(x))+a'(x)$. Let $\pi(\d x)=\pi(x)\d x$. It is easy to see that $L_a$ is symmetric on $L^2({\pi})$. Choose a point $x_0>0$ and set $$c(x)=\int_{x_0}^{x}\frac{b(y)}{a(y)}\mathrm{d}y\quad\text{and}\quad \varphi(x)=\int_{0}^x\mathrm{e}^{-c(y)}\d y.$$
So we have
\begin{equation}\label{pi}
	{\pi}(x)= {{\mathrm e}^{c(x)}{\pi}(x_0)a(x_0)}/{a(x)}.
\end{equation}
Assume that  $X^a$ is non-explosive,
that is,
$$\int_{0}^{\infty}\varphi'(y)\pi([0,y])\mathrm{d}y=\infty,
$$
then  $X$ is ergodic with stationary distribution $\pi(\d x)$(see e.g. \cite[Table 5.1]{cmf05}).

For fixed function $f\in L^2_0(\pi)$, consider Poisson equation $-L_au=f$.  By some direct calculations and \eqref{pi}, the equation has strong solution
$$			u(x)=\int_{0}^{x}\mathrm{e}^{-c(y)}\left(\int_{y}^{\infty}f(z)\frac{\mathrm{e}^{c(z)}}{a(z)}\mathrm{d}z\right)\mathrm{d}y
=\frac{1}{\pi(x_0)a(x_0)}\	\int_{0}^{\infty}f(z)\varphi(x\wedge z)\pi(\d z).
$$

Since $\sigma^2(X,f)=2(u,f)$ by Lemma \ref{Gf-u} and \eqref{representation} below, from $\pi(f)=0$ and the integration by parts we have that 
\be\lb{exp-reps}
\aligned \frac{1}{2}\sigma^2(X^a,f)&=\frac{1}{ a(x_0)\pi(x_0)}\int_{0}^{\infty}\int_{0}^{\infty}f(x)f(y)\varphi(x\wedge y)\pi(\d y)\pi(\d x)\\ &=\frac{2}{ a(x_0)\pi(x_0)}\int_{0}^{\infty}\varphi(x)f(x)\int_{x}^{\infty}f(y)\pi(\d y)\pi(\d x)\\
&=-\frac{2}{a(x_0)\pi(x_0)}\int_{0}^{\infty}\varphi(x)f(x)\int_{0}^{x}f(y)\pi(\d y)\pi(\d x)\\
&=-\frac{1}{ a(x_0)\pi(x_0)}\int_{0}^{\infty}\varphi(x)\Big[\Big(\int_0^x f(y)\pi(\d y)\Big)^2\Big]'\d x\\
&=\int_0^\infty\Big(\int_0^x f(y)\pi(\d y)\Big)^2\frac{1}{a(x)\pi(x)}\d x.
\endaligned
\de

Using the above representation, we obtain the following comparison theorem directly.

\begin{thm}\lb{compa-1}
Let $a,a_1$ be two $C^1$ positive function on $[0,\infty)$. Then Langevin diffusions $X^a$ and $X^{a_1}$, with generators of form \eqref{1d-generator}, possess the same stationary distribution $\pi$.
Moreover, if $a\geq a_1$, then for any $f\in L_0^2(\pi)$,
$$
		 \sigma^2(X^{a},f)\leq \sigma^2(X^{a_1},f).
$$
In particular, for fixed $f\in L^2_0(\pi)$, $\sigma^2(X^{ka},f)$ is non-increasing for $k\in(0,\infty)$.	
\end{thm}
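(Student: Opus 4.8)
The plan is to prove Theorem \ref{compa-1} directly from the explicit representation \eqref{exp-reps} of the asymptotic variance derived just above the statement, which is the payoff of having set up the one-dimensional diffusion machinery. The key observation is that the formula
$$
\tfrac{1}{2}\sigma^2(X^a,f)=\int_0^\infty\Big(\int_0^x f(y)\pi(\d y)\Big)^2\frac{1}{a(x)\pi(x)}\,\d x
$$
depends on the diffusion coefficient $a$ only through the factor $1/(a(x)\pi(x))$ inside the integral, with the rest of the integrand being a nonnegative quantity that does not involve $a$ at all.

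First I would establish the claim that $X^a$ and $X^{a_1}$ share the same stationary distribution $\pi$. This is essentially built into the construction: by \eqref{1d-generator} the drift is chosen as $b(x)=a(x)(\pi'(x)/\pi(x))+a'(x)$, which is exactly the condition making $L_a$ symmetric on $L^2(\pi)$ with the prescribed $\pi$, so $\pi(\d x)=\pi(x)\d x$ is stationary regardless of the choice of $a$. I would note that the same $\pi$ was fixed at the outset for both coefficients, and the symmetry computation (integration by parts against $\pi(x)\d x$) is insensitive to which positive $C^1$ function plays the role of $a$.

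Second, and this is the heart of the argument, I would compare the two variances pointwise in the integrand. Writing $F(x)=\int_0^x f(y)\pi(\d y)$, both $\tfrac12\sigma^2(X^a,f)$ and $\tfrac12\sigma^2(X^{a_1},f)$ are integrals of $F(x)^2$ against the respective weights $1/(a(x)\pi(x))$ and $1/(a_1(x)\pi(x))$. Since $F(x)^2\ge 0$ and $\pi(x)>0$, the hypothesis $a\ge a_1$ gives $1/(a(x)\pi(x))\le 1/(a_1(x)\pi(x))$ for every $x$, so the integrand for $X^a$ is dominated by that for $X^{a_1}$ at each point, and integrating yields $\sigma^2(X^a,f)\le\sigma^2(X^{a_1},f)$. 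The final monotonicity statement for $\sigma^2(X^{ka},f)$ in $k$ follows immediately by applying this comparison to $a$ and $ka$: if $0<k_1\le k_2$ then $k_2 a\ge k_1 a$, hence $\sigma^2(X^{k_2 a},f)\le\sigma^2(X^{k_1 a},f)$, i.e. the map is non-increasing in $k$.

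The main obstacle here is not the comparison itself, which reduces to the trivial monotonicity of $t\mapsto 1/t$, but rather justifying that \eqref{exp-reps} applies cleanly to both diffusions simultaneously. In particular I would want to make sure the non-explosion condition $\int_0^\infty\varphi'(y)\pi([0,y])\d y=\infty$ and the membership $f\in L^2_0(\pi)$ are genuinely satisfied for both $a$ and $a_1$ so that the representation \eqref{exp-reps} is valid in each case and the right-hand integrals are the true asymptotic variances (possibly $+\infty$, in which case the inequality still holds as stated). Granting the standing assumptions under which \eqref{exp-reps} was derived, no further analytic work is needed and the theorem follows.
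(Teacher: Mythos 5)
Your proposal is correct and is exactly the paper's argument: the paper states that Theorem \ref{compa-1} follows ``directly'' from the representation \eqref{exp-reps}, namely the pointwise comparison of the weights $1/(a(x)\pi(x))\le 1/(a_1(x)\pi(x))$ against the $a$-independent factor $\bigl(\int_0^x f(y)\pi(\d y)\bigr)^2$, with the shared stationary distribution built in by the choice of drift $b$. Your added care about non-explosion and the validity of \eqref{exp-reps} for both coefficients matches the paper's standing assumptions and adds nothing contradictory.
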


For multi-dimensional reversible diffusion processes, {explicit representation \eqref{exp-reps} for the asymptotic variance is difficult to obtain. However, we could use Theorem \ref{main-r} to get the similar comparison result as follows.


Let $V\in C^2(\bR^d)$ with $\int_{\bR^d} \mathrm{e}^{V(x)}\mathrm{d} x<\infty$. Consider the reversible diffusion process $X^{A}$ generated by  elliptic operator
$$L_{A}=\sum_{i, j} a_{i j}(x) \frac{\partial^{2}}{\partial x_{i} \partial x_{j}}+\sum_{i} b_{i}(x) \frac{\partial}{\partial x_{i}},$$
where $A(x)=\left(a_{i j}(x)\right)_{1\leq i,j\leq d},x\in\bR^d$ are positive definite matrices with $a_{i j}\in  C^{2}\left(\mathbb{R}^{d}\right)$ and
$$
b_{i}(x)=\sum_{j} a_{i j}(x) \frac{\partial}{\partial x_{j}} V(x)+\sum_{j} \frac{\partial}{\partial x_{j}} a_{i j}(x).
$$
Assume that $X^A$ is non-explosive. By \cite[Theorem 4.2.1]{DZ96}, we see that process $X^{A}$ is ergodic with stationary distribution
$$\pi(\d x):=\frac{\mathrm{e}^{V(x)}}{\int_{\bR^d} \mathrm{e}^{V(y)}\mathrm{d} y}\mathrm{d} x.$$

Denote by $(\e_{A}(\cdot,\cdot),\scr{F}_A)$  the Dirichlet form associated with the process $X^{A}$. Explicitly, we see that
\begin{equation}\lb{df-r}
\e_{A}(u,v)=\int_{\bR^d}\nabla u\cdot A\nabla v\d \pi,\quad \text{for}\ u,v\in \scr{F}_A:=\{u\in L^2(\pi): \e_A(u,u)<\infty\}.
\end{equation}

\begin{thm}\lb{compa-r}
Let $V\in C^2(\bR^d)$ with $\int_{\bR^d} \mathrm{e}^{V(x)}\mathrm{d} x<\infty$, $A(x)=(a_{ij}(x))_{1\leq i,j\leq d}$ and $A_1(x)=(a^1_{ij}(x))_{1\leq i,j\leq d}$, $x\in \bR^d$ be positive definite matrices satisfying $a_{ij}, a^1_{ij}\in C^2(\bR^d)$ for $1\leq i,j\leq d$. Suppose that $A_1\leq A$ in the sense that $A(x)-A_1(x)$ is non-negative definite for all $x\in\bR^d$.
Then for any $f\in L^2_0(\pi)$,
\begin{equation}\lb{A12}
\sigma^2(X^{A_1},f)\geq \sigma^2(X^{A},f).
\end{equation}
 In particular, for fixed $f\in L^2_0(\pi)$, $\sigma^2(X^{kA},f)$ is non-increasing for $k\in(0,\infty)$.
\end{thm}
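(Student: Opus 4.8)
The plan is to reduce the comparison of asymptotic variances to a comparison of the associated Dirichlet forms via the reversible variational formula of Theorem~\ref{main-r}, and then to exploit the pointwise matrix ordering $A_1\leq A$. Both $X^A$ and $X^{A_1}$ are reversible with the \emph{same} stationary distribution $\pi$: indeed, although the drift coefficients $b_i$ depend on the diffusion matrix, the stationary density $\mathrm{e}^{V}/\int\mathrm{e}^{V}$ depends only on $V$. Writing $\scr{M}^A_{f,1}=\{u\in\scr{F}_A:(u,f)=1\}$ and $\scr{M}^{A_1}_{f,1}=\{u\in\scr{F}_{A_1}:(u,f)=1\}$, I would apply Theorem~\ref{main-r} to each process to obtain
$$
\frac{2}{\sigma^2(X^A,f)}=\inf_{u\in\scr{M}^A_{f,1}}\e_A(u,u),\qquad
\frac{2}{\sigma^2(X^{A_1},f)}=\inf_{u\in\scr{M}^{A_1}_{f,1}}\e_{A_1}(u,u).
$$
In the extended sense (allowing infinite variance, with the convention $1/\infty=0$), the desired inequality \eqref{A12} is then equivalent to the reversed inequality for the reciprocals, namely
$$
\inf_{u\in\scr{M}^{A_1}_{f,1}}\e_{A_1}(u,u)\ \leq\ \inf_{u\in\scr{M}^A_{f,1}}\e_A(u,u).
$$

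The key step uses that $A(x)-A_1(x)$ is non-negative definite for every $x\in\bR^d$, so that by the representation \eqref{df-r}, for every $u$,
$$
\e_{A_1}(u,u)=\int_{\bR^d}\nabla u\cdot A_1\nabla u\,\d\pi\ \leq\ \int_{\bR^d}\nabla u\cdot A\nabla u\,\d\pi=\e_A(u,u).
$$
In particular $\e_A(u,u)<\infty$ forces $\e_{A_1}(u,u)<\infty$, whence $\scr{F}_A\subseteq\scr{F}_{A_1}$ and therefore $\scr{M}^A_{f,1}\subseteq\scr{M}^{A_1}_{f,1}$. Combining the set inclusion (which enlarges the feasible region, and so lowers the infimum) with the pointwise energy inequality yields
$$
\inf_{u\in\scr{M}^{A_1}_{f,1}}\e_{A_1}(u,u)\ \leq\ \inf_{u\in\scr{M}^A_{f,1}}\e_{A_1}(u,u)\ \leq\ \inf_{u\in\scr{M}^A_{f,1}}\e_A(u,u),
$$
which is exactly the inequality recorded above, and hence proves \eqref{A12}.

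For the final monotonicity assertion I would simply specialize: if $0<k_1\leq k_2$ then $k_1A\leq k_2A$ pointwise, so the comparison just established gives $\sigma^2(X^{k_1A},f)\geq\sigma^2(X^{k_2A},f)$; that is, $k\mapsto\sigma^2(X^{kA},f)$ is non-increasing on $(0,\infty)$.

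I expect the only genuine subtlety to be the bookkeeping of the form domains together with the hypotheses needed to invoke Theorem~\ref{main-r}. One must ensure that each of $X^A$ and $X^{A_1}$ is reversible and ergodic (in particular non-explosive) with stationary distribution $\pi$, so that the variational formula is available for both; and one must use the inclusion $\scr{F}_A\subseteq\scr{F}_{A_1}$ in the correct direction when comparing the two infima (the larger form has the smaller domain, yet the smaller infimum is taken over the larger domain). The analytic content, the inequality $\e_{A_1}\leq\e_A$, is immediate from the pointwise ordering of the matrices together with \eqref{df-r}, so no delicate estimates arise; the essential work has already been carried out in Theorem~\ref{main-r}.
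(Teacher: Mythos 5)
Your proposal is correct and takes essentially the same route as the paper's own proof: both apply the reversible variational formula of Theorem \ref{main-r} to $X^A$ and $X^{A_1}$, use the pointwise ordering $A_1\leq A$ together with \eqref{df-r} to obtain $\e_{A_1}(u,u)\leq\e_A(u,u)$ and the domain inclusion $\scr{F}_A\subseteq\scr{F}_{A_1}$, and then chain the two infima (larger feasible set, smaller form) to compare the reciprocals of the asymptotic variances. Your treatment of the infinite-variance case via the convention $1/\infty=0$ is just a compressed version of the paper's explicit case split, and the specialization to $k\mapsto\sigma^2(X^{kA},f)$ is identical.
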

\begin{proof}
Since $A_1\leq A$, by \eqref{df-r} it is easy to check that $\scr{F}_{A_1}\supseteq \scr{F}_{A}$ and
$$
\e_{A_1}(u,u)\leq \e_{A}(u,u)\quad \text{for all }u\in \scr{F}_{A}.
$$

Fix $f\in L^2_0(\pi)$. The inequality \eqref{A12} is trivial when $\sigma^2(X^{A_1},f)=\infty$. Now assume that $\sigma^2(X^{A_1},f)<\infty$. It follows from Theorem \ref{main-r} that
$$
2/\sigma^2(X^{A_1},f)=\inf_{\substack{u\in\scr{F}_{A_1},\\\pi(fu)=1}}\e_{A_1}(u,u)
\leq \inf_{\substack{u\in\scr{F}_{A},\\\pi(fu)=1}}\e_{A}(u,u)=2/\sigma^2(X^{A},f).
$$
Hence, the proof is completed.
\end{proof}


\subsection{Non-reversible diffusions on Riemannian Manifolds}\lb{nonrev-diff}
In this section,we turn to non-reversible case. Let $M$ be a connected, complete Riemannian manifold  with empty boundary or convex boundary, and $\lan\cdot,\cdot\ran$ be the inner product  under the Riemannian metric.
Denote  $\d x$ and $\Delta$ by the Riemannian volume and  Laplace operator on $M$, respectively.

Let $\pi(\d x):=e^{-U(x)} \d x$ be a probability measure on $M$ with potential function $U\in C^{2}(M)$.
We consider the following diffusion operator:
\be\label{Diff}
\mathfrak{L} \varphi=\Delta \varphi-\lan\nabla U-Z,\nabla \varphi\ran,
\de
where $Z$ is a $C^1$ vector field on $M$.
Denote by $\mathfrak{L}^{*}$  the dual operator of $\mathfrak{L}$ on  $L^{2}(\pi)$:
$$\mathfrak{L}^{*}\varphi=\Delta\varphi-\lan\nabla U+Z,\nabla\varphi\ran-(\operatorname{div} Z-\lan\nabla U, Z\ran)\varphi,$$
where $\operatorname{div}$ is the divergence operator.
It is well known that $\pi$ is the invariant measure of $\mathfrak{L}$ if and only if $(\mathfrak{L}^{*}1,\varphi)=0$  for $\varphi\in C_{0}^{\infty}(M),$ i.e.,
$$\int_M(\operatorname{div} Z-\lan\nabla U, Z\ran)\varphi\d \pi=\int_M\operatorname{div}( Z\mathrm{e}^{-U}) \varphi\d x=0.$$
From now on we assume that
\begin{equation}\label{A0}
\operatorname{div}( Z\mathrm{e}^{-U})\equiv 0.
\end{equation}
Then by \cite[Corollary 3.6]{brw01}, the diffusion $X$ with generator $\mathfrak{L}$ is ergdoic with stationary distribution $\pi$.

Denote the symmetric part of  $\mathfrak{L}$ with respect to $\pi$ by  $\bar{\mathfrak{L}}:=\Delta-\lan\nabla U,\nabla\ran.$
and let $\bar{X}$ be the diffusion generated by $\bar{\mathfrak{L}}$.

 Define $(\e,\scr{F})$ as the semi-Dirichlet form generated by $\mathfrak{L}$, and denote its symmetric part and antisymmetric part by $\bar{\e},\ \widehat{\e}$ respectively. So from the integration by parts and \eqref{A0}, we have
$$
\bar{\e}(\varphi, \phi)= \int_M\lan \nabla \varphi,\nabla \phi\ran \d \pi\quad \text{and}\quad \widehat{\e}(\varphi, \phi)=\int_M \phi\lan Z,\nabla\varphi\ran \d\pi  \quad \varphi, \phi \in C_{0}^{\infty}(M).
$$
 Indeed, it is easy to check that $(\bar{\e},\scr{F})$ is the Dirichlet form generated by $\bar{\mathfrak{L}}$.

We suppose that the following {\bf Assumption A} holds:
\begin{itemize}

\item[(\rm A1)] $|\Delta U| \leq \epsilon_*|\nabla U|^{2}+C_U$ for some $\epsilon_*<1$  and $C_U \geq 0;$

\item[(\rm A2)] there is a constant $K$ such that $|Z| \leq K(|\nabla U|+1)$;

\item[(\rm A3)] the symmetric Dirichlet form $(\bar{\e},\scr{F})$ satisfies the Poincar\'{e} inequality, i.e., there exists a constant $\lambda_1>0$ such that
$$\|\varphi\|^2\leq \lambda_1^{-1}\bar{\e}(\varphi,\varphi)\quad \text{for all } \varphi\in\scr{F},$$
where $\|\cdot\|$ is $L^2(\pi)$-norm.
\end{itemize}

We note that (A3) is equivalent to the $L^2$-exponential ergodicity of semigroup of diffusion $\bar{X}$.

\begin{lem}\lb{Lang-sec}
If {\bf Assumption A} and \eqref{A0} hold, then $(\mathscr{E},\scr{F})$ satisfies the sector condition \eqref{weak-sect}. Therefore, Theorem \ref{main1} holds for the diffusion $X$.
\end{lem}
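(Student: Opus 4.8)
The plan is to verify the weak sector condition \eqref{weak-sect} directly by decomposing $\e = \bar\e + \widehat\e$ into its symmetric and antisymmetric parts. Since $\widehat\e(u,u) = 0$, one has $\e(u,u) = \bar\e(u,u)$, so \eqref{weak-sect} reduces to controlling the antisymmetric part: it suffices to produce a constant $C$ such that $|\widehat\e(u,v)| \le C\,\bar\e(u,u)^{1/2}\bar\e(v,v)^{1/2}$ for $u,v \in C_0^\infty(M)$. Indeed the symmetric part satisfies $|\bar\e(u,v)| \le \bar\e(u,u)^{1/2}\bar\e(v,v)^{1/2}$ by Cauchy--Schwarz, so that \eqref{weak-sect} then holds on the core $C_0^\infty(M)$ with $K = 1 + C$, and extends to $\scr F$ by continuity.

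First I would reduce to mean-zero $v$. Writing $v_0 := v - \pi(v)$, the gradient of a constant vanishes, so $\bar\e(v,v) = \bar\e(v_0,v_0)$; moreover integration by parts together with \eqref{A0} gives $\int_M\langle Z,\nabla u\rangle\d\pi = -\int_M u\,\mathrm{div}(Z\mathrm e^{-U})\d x = 0$, whence $\widehat\e(u,v) = \widehat\e(u,v_0)$. Thus I may assume $\pi(v) = 0$. By Cauchy--Schwarz,
$$|\widehat\e(u,v)| = \Big|\int_M v\langle Z,\nabla u\rangle\d\pi\Big| \le \bar\e(u,u)^{1/2}\Big(\int_M v^2|Z|^2\d\pi\Big)^{1/2},$$
and since $|Z|^2 \le 2K^2(|\nabla U|^2+1)$ by (A2), the whole estimate comes down to bounding $I := \int_M v^2|\nabla U|^2\d\pi$ in terms of $\bar\e(v,v)$.

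This weighted bound is the crux. Integrating by parts against $\pi = \mathrm e^{-U}\d x$ yields the identity $\int_M g|\nabla U|^2\d\pi = \int_M\langle\nabla U,\nabla g\rangle\d\pi + \int_M g\,\Delta U\d\pi$; taking $g = v^2$ and estimating $\int_M v^2\Delta U\d\pi \le \epsilon_* I + C_U\|v\|^2$ via (A1) gives
$$I \le 2\int_M v\langle\nabla U,\nabla v\rangle\d\pi + \epsilon_* I + C_U\|v\|^2.$$
Bounding the cross term by Cauchy--Schwarz as $2I^{1/2}\bar\e(v,v)^{1/2}$ and applying Young's inequality with weight $(1-\epsilon_*)/2$, one may absorb $\epsilon_* I$ and half of the cross term into the left side, which is possible \emph{precisely because} $\epsilon_* < 1$, leaving
$$I \le \frac{4}{(1-\epsilon_*)^2}\,\bar\e(v,v) + \frac{2C_U}{1-\epsilon_*}\,\|v\|^2.$$

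Finally I would invoke the Poincar\'e inequality (A3), valid on the mean-zero function $v$, to absorb $\|v\|^2 \le \lambda_1^{-1}\bar\e(v,v)$, which gives $I \le C_1\bar\e(v,v)$ and hence the desired bound $|\widehat\e(u,v)| \le C\,\bar\e(u,u)^{1/2}\bar\e(v,v)^{1/2}$; this establishes \eqref{weak-sect}. For the concluding assertion, the $L^2$-exponential ergodicity required by Theorem \ref{main1} follows from (A3) alone: for $u \in L^2_0(\pi)$ the function $P_tu$ stays mean-zero, so $\frac{\d}{\d t}\|P_tu\|^2 = -2\e(P_tu,P_tu) = -2\bar\e(P_tu,P_tu) \le -2\lambda_1\|P_tu\|^2$, and Gr\"onwall gives $\|P_tu\| \le \mathrm e^{-\lambda_1 t}\|u\|$. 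With both hypotheses verified, Theorem \ref{main1} applies to $X$. The single hard point is the weighted estimate for $I$: it is the only place where the subcriticality $\epsilon_* < 1$ in (A1) enters, and the absorption step fails without it.
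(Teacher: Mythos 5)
Your proof is correct and takes essentially the same route as the paper's: both reduce the sector condition to the antisymmetric part $\widehat{\e}$, estimate it via Cauchy--Schwarz and (A2), and then establish the key weighted bound $\int_M v^2|\nabla U|^2\,\d\pi \lesssim \bar{\e}(v,v)$ by integration by parts against $\mathrm{e}^{-U}$, (A1) with the subcriticality $\epsilon_*<1$ enabling the absorption, and the Poincar\'e inequality (A3). Your two refinements --- reducing to mean-zero $v$ before invoking (A3), and explicitly deriving the $L^2$-exponential ergodicity of $X$ itself from (A3) via Gr\"onwall (a hypothesis of Theorem \ref{main1} that the paper addresses only in a remark, and there only for the symmetric diffusion $\bar{X}$) --- tighten the argument but do not change its structure.
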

\begin{proof}

Since $(\bar{\e},\scr{F})$ is symmetric, it  satisfies the sector condition, we only need to check the sector condition for the antisymmetric part $\widehat{\e}$.

Fix $\phi,\varphi\in C^\infty_0(M)$. By	Cauchy-Schwarz inequality and (A2) we have
\begin{equation}\lb{c-s}
	\begin{split}
		\int_M\lan\phi Z,\nabla \varphi\ran \d \pi&\leq K\int_M (|\nabla U|+1)|\phi\nabla \varphi|\d \pi\\
		&\leq K\int_M |\phi\nabla \varphi|d \pi+K\bar{\e}(\varphi,\varphi)^{1/2}	\||\nabla U| \phi\|	.
	\end{split}
\end{equation}
For the last term above, the integration by parts on manifold, Cauchy-Schwarz inequality and (A1)  yield that
\begin{equation}\label{i-p and c-s}
\begin{aligned}
	\||\nabla U| \phi\|^2 &=-\int_M \lan\phi^{2} \nabla U , \nabla e^{-U}\ran \d x
	=\int_M \operatorname{div}\left(\phi^{2} \nabla U\right) e^{-U} \d x \\
	&=\int_M \lan2 \phi \nabla \phi , \nabla U\ran \d \pi+\int_M \Delta U \phi^{2} \d \pi\\
&=\int_M \lan2 \phi \nabla \phi , \nabla U\ran \d \pi+\int_M\left(\epsilon_* |\nabla U|^{2}+C_U\right) \phi^{2} \d \pi.
\end{aligned}
\end{equation}

Now fix $\varepsilon>0$ such that $\epsilon_*+\varepsilon<1$. Combining inequality $|x y| \leq\left(x^{2} / \varepsilon+\varepsilon y^{2}\right) / 2$ with \eqref{i-p and c-s} and (A3) we have
$$
\begin{aligned}
		\||\nabla U| \phi\|^2 &\leq 2 \int |\nabla \phi||\phi\nabla U| \mathrm{d} \pi+\int\left(\epsilon_* |\nabla U|^{2}+C_U\right) \phi^{2} \mathrm{~d} \pi \\
	&\leq \frac{1}{\varepsilon}\bar{\e}(\phi,\phi)+(\epsilon_*+\varepsilon)\|\phi|\nabla U|\|^{2}+C_U\|\phi\|^{2}\\
	&\leq \frac{1}{\varepsilon}\bar{\e}(\phi,\phi)+(\epsilon_*+\varepsilon)\|\phi|\nabla U|\|^2+C_U\lambda_1^{-1}\bar{\e}(\phi,\phi),
\end{aligned}
$$
which implies that
$$
\||\nabla U| \phi\|^2\leq\frac{\lambda_1+C_U\varepsilon}{(1-\epsilon_*-\varepsilon)\varepsilon\lambda_1}\bar{\e}(\phi,\phi).
$$
Combining this with \eqref{c-s} and (A3), we obtain that $\widehat{\e}$ satisfies the sector condition on $\scr{F}$.
\end{proof}

From Lemma \ref{Lang-sec} and Theorem \ref{main1}, we obtain the following comparison result.

\begin{thm}\lb{compr-diff}
Suppose that {\bf Assumption A} holds. Then for any $f\in L^2_0(\pi)$,
$$
\sigma^2(X,f)\leq \sigma^2(\bar{X},f).
$$
\end{thm}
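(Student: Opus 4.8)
The plan is to pit the two variational formulas of Section~1 against one another. The key structural fact is that the non-reversible form $\e$ and its symmetric part $\bar\e$ are defined on the \emph{same} domain $\scr F$ (indeed $(\bar\e,\scr F)$ is exactly the Dirichlet form of $\bar X$), so the constraint sets $\scr M_{f,1}$ and $\scr M_{f,0}$ are the same whether we work with $X$ or $\bar X$. First I would confirm that both formulas are in force. Lemma~\ref{Lang-sec} supplies the sector condition for $(\e,\scr F)$, and it also asserts that Theorem~\ref{main1} applies to $X$; the remaining ingredient, the $L^2$-exponential ergodicity of $X$, follows from the Poincar\'e inequality (A3): for $u\in L^2_0(\pi)$ one has $\pi(P_tu)=\pi(u)=0$, whence
$$\frac{\d}{\d t}\|P_tu\|^2=-2\e(P_tu,P_tu)=-2\bar\e(P_tu,P_tu)\le-2\lambda_1\|P_tu\|^2,$$
using $\e(w,w)=\bar\e(w,w)$ and (A3), so that $\|P_tu\|\le\|u\|\mathrm e^{-\lambda_1 t}$. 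Thus Theorem~\ref{main1} gives $2/\sigma^2(X,f)=\inf_{u\in\scr M_{f,1}}\sup_{v\in\scr M_{f,0}}\e(u+v,u-v)$, while Theorem~\ref{main-r} (applicable since $\bar X$ is reversible and ergodic) gives $2/\sigma^2(\bar X,f)=\inf_{u\in\scr M_{f,1}}\bar\e(u,u)$.

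Next I would expand the bilinear form along the splitting $\e=\bar\e+\widehat\e$. Since $\bar\e$ is symmetric and $\widehat\e$ is antisymmetric, the cross terms simplify and one finds
$$\e(u+v,u-v)=\bar\e(u,u)-\bar\e(v,v)-2\widehat\e(u,v).$$
The decisive observation is that $v=0\in\scr M_{f,0}$, so for each fixed $u\in\scr M_{f,1}$ the inner supremum dominates its value at $v=0$, namely $\bar\e(u,u)$. Taking the infimum over $u\in\scr M_{f,1}$ yields
$$2/\sigma^2(X,f)=\inf_{u\in\scr M_{f,1}}\sup_{v\in\scr M_{f,0}}\e(u+v,u-v)\ge\inf_{u\in\scr M_{f,1}}\bar\e(u,u)=2/\sigma^2(\bar X,f).$$
Since both asymptotic variances are finite and positive under the standing hypotheses, inverting this inequality gives precisely $\sigma^2(X,f)\le\sigma^2(\bar X,f)$.

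I expect essentially all the genuine work to sit in the preparatory step rather than in the comparison: once both variational formulas are available on the common domain $\scr F$, the inequality reduces to the single test function $v=0$ in the inner supremum. The point deserving the most care is therefore the verification that the hypotheses of Theorem~\ref{main1} truly hold for the non-reversible $X$ — the sector condition from Lemma~\ref{Lang-sec} together with the $L^2$-exponential ergodicity extracted from (A3) above — and the observation that $\e$ and $\bar\e$ act on one and the same form domain, so that no mismatch between the admissible classes for $X$ and $\bar X$ can occur.
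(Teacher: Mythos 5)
Your proof is correct and follows essentially the same route as the paper: both rest on taking $v=0$ in the inner supremum of the minimax formula \eqref{vf-av} and using the diagonal identity $\e(u,u)=\bar\e(u,u)$ to land on the reversible formula for $\bar X$. The only difference is that you explicitly derive the $L^2$-exponential ergodicity of the non-reversible $X$ from (A3) via the Gronwall argument $\frac{\d}{\d t}\|P_tu\|^2=-2\bar\e(P_tu,P_tu)\le-2\lambda_1\|P_tu\|^2$ — a hypothesis of Theorem \ref{main1} that the paper's Lemma \ref{Lang-sec} invokes without verification — so your write-up is, if anything, more complete on this point.
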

\begin{proof}
Since the conditions in Theorem \ref{main1} are satisfied by Lemma \ref{Lang-sec}, 
we obtain by taking $v=0$ that
$$
\aligned
2/\sigma^2(X,f)&=\inf_{u\in\scr{M}_{f,1}}\sup_{v\in\scr{M}_{f,0}}\e(u+v,u-v)\\
&\geq \inf_{u\in\scr{M}_{f,1}}\e(u,u)=\inf_{u\in\scr{M}_{f,1}}\bar{\e}(u,u)=2/\sigma^2(\bar{X},f).
\endaligned
$$
\end{proof}

\begin{rem}
Similar comparison result in Theorem \ref{compr-diff} can be found in \cite{DLP16,HNW15}. For example, \cite{HNW15} proves the comparison theorem by using a spectral theorem (see \cite[Section 3.4.3]{HNW15}). Here we provide a completely different proof by the new variational formula.
\end{rem}

\begin{exm}(\cite[Example 5.2]{KS14})
	Let $M=\mathbb{R}^{2}$, potential function $U(x)=(1 / 2 \pi) e^{-|x|^{2} / 2}$ and vector field
	$$
	Z=-c x_{2} \frac{\partial}{\partial x_{1}}+c x_{1} \frac{\partial}{\partial x_{2}},
	$$ where $c$ is a positive constant. Consider the 2-dimensional Ornstein-Uhlenbeck diffusion with rotation:
	$$
	\mathfrak{L}:=\frac{1}{2}\left(\frac{\partial^{2}}{\partial x_{1}^{2}}+\frac{\partial^{2}}{\partial x_{2}^{2}}\right)-(x_{1}+c x_{2}) \frac{\partial}{\partial x_{1}}-(x_{2}-cx_1) \frac{\partial}{\partial x_{2}}.
	$$
	 Its invariant probability measure is $\pi(d x)=(1 / 2 \pi) \mathrm{e}^{-|x|^{2} / 2} d x$. The symmetric part of $\mathfrak{L}$ with respect to $\pi$ is
	$$\bar{\mathfrak{L}}:=\left(\frac{\partial^{2}}{\partial x_{1}^{2}}+\frac{\partial^{2}}{\partial x_{2}^{2}}\right)-x_{1} \frac{\partial}{\partial x_{1}}-x_{2} \frac{\partial}{\partial x_{2}}. $$

	Since the symmetric Ornstein-Uhlenbeck diffusion generated by $\bar{\mathfrak{L}}$ is exponentially ergodic, (A3) is satisfied.  A direct calculation shows that $ \operatorname{div}( Z\mathrm{e}^{-U})=0$ and (A1), (A2) are satisfied. Hence,  Theorems \ref{main1} and \ref{compr-diff} are valid.  
\end{exm}


\section{Proofs of Theorem \ref{main1} and Corollary \ref{exit-time}}\lb{proofs}

Recall that $X=\{X_t\}_{t\geq0}$ is a positive recurrent (or ergodic) Markov process on a Polish space $(S,\mathcal{S})$, with strongly continuous contraction transition semigroup $\{P_t\}_{t\geq0}$ and stationary distribution $\pi$.
$(L,\mathscr{D}(L))$, $(\e,\scr{F})$ are its associated infinitesimal generator in $L^2(\pi)$ and semi-Dirichlet form, respectively.
For fixed $f\in L^2_0(\pi)$, we want to study the 
 asymptotic variance of $X$ and $f$ defined in \eqref{av-f}. Indeed, from \cite[Section 2.5]{KLO12}, we see that the asymptotic variance can be represented by $P_t$ as follows:
\begin{equation}\lb{av-fp}
\sigma^2(X,f)=2\lim_{t\rightarrow\infty}\int_0^t (1-\frac{s}{t})( P_s f,f)\d s.
\end{equation}

To prove Theorem \ref{main1}, first we do some preparations.
For any $\alpha>0$, set $G_\alpha f=\int_0^\infty \text{e}^{-\alpha s}P_sf\d s$ for $f\in L^2(\pi)$. From \cite[Chapter 1, Proposition 1.10]{MR92} we see that $(G_\alpha)_{\alpha>0}$ is the strong continuous contraction resolvent associated to $L$ and $G_\alpha f\in\scr{D}(L)$ for all  $f\in L^2(\pi)$.
If the semigroup $\{P_t\}_{t\geq0}$ is $L^2$-exponentially ergodic, then it is known that $G f:=\int_0^\infty P_sf\d s\in L^2(\pi)$ for $f\in L_0^2(\pi)$.

\begin{lem}\lb{Gf-u}
Suppose that the semigroup $\{P_t\}_{t\geq0}$ is
$L^2$-exponentially ergodic and its corresponding semi-Dirichlet form $(\e,\scr{F})$ satisfies the sector condition \eqref{weak-sect}. Then for all $f\in L^2_0(\pi)$, we have $Gf\in\scr{D}(L)$ and
$$
\e(Gf,u)=(f,u),\quad  u\in \scr{F}.
$$

\end{lem}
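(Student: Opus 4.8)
The plan is to show that $Gf$ serves as a weak solution of the Poisson equation $-Lu = f$, and that the natural variational identity $\e(Gf,u)=(f,u)$ holds for all test functions $u\in\scr{F}$. The starting point is the resolvent family $G_\alpha f=\int_0^\infty e^{-\alpha s}P_s f\,\d s$ for $\alpha>0$. By \cite[Chapter 1, Proposition 1.10]{MR92} we know $G_\alpha f\in\scr{D}(L)$, and the resolvent identity $(\alpha-L)G_\alpha f=f$ gives us $\e_\alpha(G_\alpha f,u)=(f,u)$ for all $u\in\scr{F}$ directly from the definitions of $\e$ and $\e_\alpha$. The idea is then to pass to the limit $\alpha\to0$ and identify the limit of $G_\alpha f$ with $Gf$.

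First I would establish that $G_\alpha f\to Gf$ in $L^2(\pi)$ as $\alpha\to0$. This follows from $L^2$-exponential ergodicity: for $f\in L^2_0(\pi)$ we have $\|P_s f\|\le C\|f\|e^{-\lambda_1 s}$, so that $Gf=\int_0^\infty P_s f\,\d s$ converges absolutely in $L^2(\pi)$, and $Gf-G_\alpha f=\int_0^\infty(1-e^{-\alpha s})P_s f\,\d s\to0$ by dominated convergence. Next I would obtain uniform control on the $\e$-norms: using $\e_\alpha(G_\alpha f,G_\alpha f)=(f,G_\alpha f)$ together with $\e_\alpha(u,u)=\e(u,u)+\alpha\|u\|^2\ge\bar{\e}_1(u,u)-\|u\|^2$ or rather directly from non-negativity of $\e$, one bounds $\e(G_\alpha f,G_\alpha f)\le(f,G_\alpha f)\le\|f\|\,\|G_\alpha f\|$, which stays bounded as $\alpha\to0$ since $\|G_\alpha f\|\to\|Gf\|$. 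Hence the family $\{G_\alpha f\}$ is bounded in the Hilbert space $(\scr{F},\bar{\e}_1^{1/2})$.

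Because $(\scr{F},\bar\e_1)$ is a Hilbert space, bounded sequences have weakly convergent subsequences. Taking $\alpha_n\downarrow0$, I would extract a weak limit $g\in\scr{F}$ of $G_{\alpha_n}f$, which must coincide with $Gf$ since weak $\scr{F}$-convergence implies $L^2(\pi)$-convergence and the $L^2$-limit is already identified as $Gf$; in particular $Gf\in\scr{F}$. Passing to the weak limit in $\e_\alpha(G_\alpha f,u)=(f,u)$—where the symmetric part $\bar\e$ passes through weak convergence as an inner product, the antisymmetric part $\widehat\e(\cdot,u)$ is a bounded linear functional on $\scr{F}$ under the sector condition, and the term $\alpha(G_\alpha f,u)\to0$—yields $\e(Gf,u)=(f,u)$ for all $u\in\scr{F}$. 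Finally, to upgrade $Gf\in\scr{F}$ to $Gf\in\scr{D}(L)$, I would observe that $u\mapsto\e(Gf,u)=(f,u)$ is continuous in the $L^2(\pi)$-norm, which by the characterization of the generator's domain forces $Gf\in\scr{D}(L)$ with $-LGf=f$.

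The main obstacle I expect is the passage to the weak limit in the antisymmetric part of the form, and more precisely justifying that the functional $u\mapsto\widehat\e(G_\alpha f,u)$ converges. The sector condition \eqref{weak-sect} is exactly what makes $\widehat\e$ (and hence $\e$) continuous with respect to the $\bar\e_1^{1/2}$-norm in each argument, so that weak convergence of $G_{\alpha_n}f$ in $\scr{F}$ transfers correctly; without it, $\e(\cdot,u)$ need not be $\scr{F}$-continuous and the argument collapses. A secondary technical point is confirming that the weak $\scr{F}$-limit is genuinely $Gf$ rather than merely some element agreeing with it in $L^2$—this is handled by uniqueness of limits, since the continuous embedding $\scr{F}\hookrightarrow L^2(\pi)$ guarantees weak $\scr{F}$-convergence implies weak $L^2$-convergence.
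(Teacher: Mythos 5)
Your proof is correct, but it takes a genuinely different route from the paper's. The paper argues at the operator level with \emph{strong} convergence: using $LG_{\alpha}f=\alpha G_{\alpha}f-f$, it observes that $\|L(G_{1/n}f-G_{1/m}f)\|=\|\tfrac1n G_{1/n}f-\tfrac1m G_{1/m}f\|\to0$, so $\{G_{1/n}f\}$ is Cauchy in the graph norm; closedness of $L$ then yields $Gf\in\scr{D}(L)$, the identity $-LGf=f$ is extracted via the resolvent equation $Gf-G_\alpha f=\alpha G_\alpha Gf$, and the sector condition enters only at the very last step, through \cite[Chapter 1, Corollary 2.10]{MR92}, to convert $((-L)Gf,u)=(f,u)$ into $\e(Gf,u)=(f,u)$ for $u\in\scr{F}$. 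You instead argue at the form level with \emph{weak} convergence: the energy identity $\e_\alpha(G_\alpha f,G_\alpha f)=(f,G_\alpha f)$ gives boundedness of $\{G_\alpha f\}$ in the Hilbert space $(\scr{F},\bar{\e}_1^{1/2})$, weak compactness together with the continuous embedding $\scr{F}\hookrightarrow L^2(\pi)$ identifies the weak limit as $Gf$, the sector condition lets you pass to the limit in $\e_\alpha(G_\alpha f,u)=(f,u)$ (note this identity itself needs a short density argument to extend from $u\in\scr{D}(L)$ to $u\in\scr{F}$, which your "directly from the definitions" glosses over), and only afterwards do you recover $Gf\in\scr{D}(L)$ from the characterization of the generator domain by $L^2$-continuity of $u\mapsto\e(Gf,u)$, which is a separate standard fact about coercive closed forms in \cite{MR92}. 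The trade-off is clear: the paper's argument is more elementary and establishes $Gf\in\scr{D}(L)$ and $-LGf=f$ without invoking the sector condition at all (it is needed only so that $(\e,\scr{F})$ exists and the conclusion makes sense), while your scheme uses the sector condition throughout; in exchange, the energy-estimate-plus-weak-compactness route produces the variational identity directly and would remain viable in settings where strong graph-norm convergence of $G_\alpha f$ is not available.
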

\begin{proof}
We first prove that  $Gf\in \scr{D}(L)$ for all $f\in L^2_0(\pi)$.
Note that the generator $L$ is closed and densely defined, that is, $\scr{D}(L)$ is complete with respect to the graph norm $\|Lu\|+\|u\|,u\in\scr{D}(L)$ (see e.g. \cite[Chapter 1, Proposition 1.10]{MR92}). Thus for fixed $f\in L^2_0(\pi)$, we only need to prove that $\|G_{1/n}f-Gf\|\rightarrow 0$ as $n\rightarrow \infty$ and $\{G_{1/n}f\}_{n\geq 1}$ is a Cauchy sequence under $\|L\cdot\|$ by $G_{1/n}f\in \scr{D}(L),n\geq1$. Indeed, it follows from $L^2$-exponential ergodicity and H\"older inequality that
\be\lb{G_1/n}
\aligned
\|G_{1/n} f-Gf\|&= \Big\|\int_0^\infty(1-{\rm e}^{-s/n})P_sf\d s\Big\|
\leq\int_0^\infty (1-{\rm e}^{-s/n})\| P_sf\|\d s\\
&\leq C\|f\|\int_0^\infty(1-{\rm e}^{-s/n}){\rm e}^{-\lambda_1 s}\d s\\
&= C\|f\|\fr{1/n}{\lmd_1(\lmd_1+1/n)}\rightarrow 0,\quad \text{as }n\rightarrow \infty.
\endaligned
\de
On the other hand, since $Lf=(\alpha-G_\alpha^{-1})f$ for all $\alpha>0$ and $f\in L^2_0(\pi)$, we have
$$
\aligned
\|L(G_{1/n}f-G_{1/m}f)\|&=\|(1/n-G_{1/n}^{-1})G_{1/n}f-(1/m-G_{1/m}^{-1})G_{1/m}f\|\\
&=\|\frac{1}{n}G_{1/n}f-\frac{1}{m}G_{1/m}f\|\\
&\leq \frac{1}{n}\|G_{1/n}f-G_{1/m}f\|+|\frac{1}{n}-\frac{1}{m}|\|G_{1/m}f\|\\
&\rightarrow 0,\quad \text{as }n,m\rightarrow\infty.
\endaligned
$$
Therefore $Gf\in \scr{D}(L)$.

Next we prove that $-LGf=f$ for $f\in L^2_0(\pi)$. Arguing similarly as we did in \eqref{G_1/n}, $\lim_{\bt\rar0}\|G_\bt f-Gf\|=0$ for  $f\in L^2_0(\pi)$. Combining this fact with the property $G_\beta-G_\alpha=(\alp-\bt)G_\alpha G_\beta$, we obtain that
$$
Gf-G_\alpha f=\alpha G_\alpha Gf,\quad \text{for }\alpha>0 ,\ f\in L^2_0(\pi).
$$
Using this equality and the fact $Gf\in \scr{D}(L)$ shows that for any $\alpha>0$ and $ f\in L^2_0(\pi)$,
$$
G_\alpha(-LGf)=G_\alpha(G_\alpha^{-1}-\alpha)Gf=Gf-\alpha G_\alpha Gf=G_\alpha f.
$$
That is, $-LGf=f$ for all $f\in L_0^2(\pi)$.

From above analysis and \cite[Chapter 1, Corollary 2.10]{MR92} we could obtain that for any $f\in L^2_0(\pi),u\in\scr{F}$,
$$
\e(Gf,u)=((-L)Gf,u)=( f,u).
$$
\end{proof}

We now proceed to prove Theorem \ref{main1}.

\medskip
\noindent{\bf Proof of Theorem \ref{main1}.}
For fixed $f\in L^2_0(\pi)$, we first claim that the limit in \eqref{av-f}, i.e. \eqref{av-fp}, exists and $\sigma^2(X,f)=2(Gf,f)<\infty$. Indeed, for $t>0$,
$$
2\int_0^t (1-\frac{s}{t})( P_s f,f)\d s=2\int_0^t( P_sf,f)\d s-\frac{2}{t}\int_0^ts( P_sf,f)\d s.
$$
Since $\{P_t\}_{t\geq 0}$ is $L^2$-exponentially ergodic, we arrive at

	$$
		\aligned
			\frac{1}{t}\Big|\int_0^ts( P_sf,f)\d s\Big|&\leq \frac{1}{t} \int_0^t s\|P_s f\|\|f\|\d s\leq \frac{C\|f\|^2}{t} \int_0^t s\mathrm{e}^{-\lambda_1 s}\d s\\
			&\leq\frac{1-(1+\lambda_1 t)\mathrm{e}^{-\lambda_1 t}}{t}C\|f\|^2  \rightarrow0, \ \text{as}\ t\rightarrow\infty,
		\endaligned
	$$
and
$$
\Big|\int_t^\infty ( P_sf,f)\d s\Big|\leq C\|f\|^2\int_t^\infty \text{e}^{-\lambda_1 s}\d s\rightarrow 0,\quad \text{as }t\rightarrow\infty.
$$
Therefore, by combining above analysis, we obtain that the limit in \eqref{av-fp} exists and
\begin{equation*}\label{representation}
	\sigma^2(X,f)=2\int_0^\infty( P_sf,f)\d s<\infty.
\end{equation*}

By the Fubini-Tonelli’s theorem and $L^2$-exponential ergodicity again we get
$$
\int_0^\infty(P_sf,f)\d s=\int_0^\infty\int_S fP_sf\d\pi\d s=\int_S\int_0^\infty fP_sf\d s\d\pi=( Gf,f).
$$
Thus
\begin{equation}\label{representation}
\sigma^2(X,f)=2(Gf,f)<\infty.
\end{equation}
To prove \eqref{vf-av}, we set $w=Gf/(Gf,f),\ w^*=G^*f/( Gf,f)$ and $u_0=(w+w^*)/2, v_0=(w-w^*)/2$. Then $u_0\in \scr{M}_{f,1}$ and $v_0\in \scr{M}_{f,0}$ by noting
$$(Gf,f)=\int_0^\infty(P_sf,f)\d s=\int_0^\infty(f,P^*_sf)\d s=( G^*f,f).
$$

Now  let $v_1=v-v_0$ for any $v\in\scr{M}_{f,0}$. By the definition of $w,w^*,v_0$ and Lemma \ref{Gf-u}, we have $\pi(v_1f)=0$ and
$$
\e(v_1,w^*)=\e(w,v_1)=\frac{1}{( Gf,f)}\e(Gf,v_1)=\frac{1}{( Gf,f)}( f,v_1)=0.
$$
Therefore, using this fact with $\e(w,w^*)=1/(Gf,f)$ and $\e(u,u)\geq0$ for all $u\in\scr{F}$ gives that
$$
\e(u_0+v,u_0-v)=\e(w-v_1,w^*+v_1)=\e(w,w^*)-\e(v_1,v_1)\leq 1/( Gf,f),
$$
which implies that
\be\lb{geq}
1/( Gf,f)\geq\inf_{u\in\scr{M}_{f,1}}\sup_{v\in\scr{M}_{f,0}}\e(u+v,u-v).
\de

For the converse inequality,  let $u_1=u-u_0$ for any $u\in\scr{M}_{f,1}$. Since $u_0\in\scr{M}_{f,1}$, we also have $\pi(u_1f)=0$. Similar  argument shows that
$$
\e(u+v_0,u-v_0)=\e(w+u_1,w^*+u_1)=\e(w,w^*)+\e(u_1,u_1)\geq 1/( Gf,f).
$$
Therefore,
\be\lb{leq}
1/(Gf,f)\leq\inf_{u\in\scr{M}_{f,1}}\sup_{v\in\scr{M}_{f,0}}\e(u+v,u-v).
\de
So we obtain \eqref{vf-av} by combining \eqref{geq}, \eqref{leq} and the fact $\sigma^2(X,f)=2(Gf,f)$.

When process $X$ is reversible,  $\e(\cdot,\cdot)$ is symmetric, i.e.,
$$
\e(u,v)=\e(v,u),\quad \text{for }u,v\in\scr{F}.
$$
Thus 
$$
\e(u+v,u-v)=\e(u,u)-\e(v,v)\leq \e(u,u).
$$
That is, the supremum in \eqref{vf-av} is attained by $v=0$ for any fixed $u\in\scr{M}_{f,1}$. Hence, we obtain \eqref{vf-av-r}.
\qed

By using Theorem \ref{main1}, we prove Corollary \ref{exit-time} as follows.

\medskip
\noindent{\bf Proof of Corollary \ref{exit-time}.}
Fix an open set $\Omega\subset S$ with $\pi(\Omega)\in (0,1)$. It follows from \cite[Theorem 3.3]{HKMW20} that
\be\label{va-exit}
1/\bE_\pi\tau_\Omega=\inf_{u\in \cN_{\Omega,1}}\e(u,u),
\de
where 	$
\cN_{\Omega,1}:=\{u\in\mathscr{F}:u|_{\Omega^c}=0\ \text{and }\pi(u)=1\}.
$
Take $$f=\frac{\mathbf{1}_\Omega-\pi(\Omega)}{1-\pi(\Omega)}.$$
It is easy to check that
$\pi(f)=0$ and $\|f\|^2=\pi(\Omega)/\pi(\Omega^c).$
Notice that for any $u\in \cN_{\Omega,1},$ by simple calculation we have
$\pi(uf)=1$, thus $u\in \cM_{f,1}$. So we see that $\cN_{\Omega,1}\subset \cM_{f,1}$.
Combining this fact with \eqref{vf-av-r} and \eqref{va-exit}, we obtain that
$$2/\sigma^2(X,f)=\inf_{u\in\scr{M}_{f,1}}\e(u,u)\leq \inf_{u\in \cN_{\Omega,1}}\e(u,u)=1/\bE_\pi\tau_\Omega.$$
That is, $\bE_\pi\tau_\Omega\leq \sigma^2(X,f)/2.$
Moreover, from the reversibility and $L^2$-exponential ergodicity we have
$$
\sigma^2(X,f)/2=\int_0^\infty (P_sf,f)\d s\leq \int_0^\infty \|P_sf\|\|f\|\d s\leq \|f\|^2/\lambda_1.
$$
Hence,
$$
\bE_\pi\tau_\Omega\leq \frac{\|f\|^2}{2\lambda_1}=\frac{\pi(\Omega)}{2\lambda_1\pi(\Omega^c)}.
$$
\qed


{\bf Acknowledgement}\
Lu-Jing Huang acknowledges support from NSFC (No. 11901096), NSF-Fujian(No. 2020J05036), the Program for Probability and Statistics: Theory and
Application (No. IRTL1704), and the Program for Innovative Research Team in Science and Technology in Fujian Province University (IRTSTFJ). Yong-Hua Mao and Tao Wang acknowledge support by the National Key R\&D  Program of China (2020YFA0712900) and the National Natural Science
Foundation of China (Grant No.11771047).


\bibliographystyle{plain}
\bibliography{jump_drift}

\vskip 0.3truein

{\bf Lu-Jing Huang:}
College of Mathematics and Informatics, Fujian Normal University, Fuzhou, 350007, P.R. China. E-mail: \texttt{huanglj@fjnu.edu.cn}

\medskip
{\bf Yong-Hua Mao:}
Laboratory of Mathematics and Complex Systems(Ministry of Education), School of Mathematical Sciences, Beijing Normal University, Beijing 100875, P.R. China. E-mail: \texttt{maoyh@bnu.edu.cn}

\medskip
{\bf Tao Wang:}
Laboratory of Mathematics and Complex Systems(Ministry of Education), School of Mathematical Sciences, Beijing Normal University, Beijing 100875, P.R. China. E-mail: \texttt{wang\_tao@mail.bnu.edu.cn}

\end{document}